\newtheorem{thm}{Theorem}
\newtheorem*{que*}{Question}
\newtheorem{prop}[thm]{Proposition}
\newtheorem{lem}[thm]{Lemma}
\newcommand*{\house}[1]{%
  \mathord{%
    \mathpalette\@house{#1}%
  }%
}
\newcommand*{\@house}[2]{%
  % #1: math style
  % #2: expression that gets the "house"
  % get the line width of `\overline' in the current math font size
  \dimen@=\fontdimen8 %
      \ifx#1\scriptscriptstyle\scriptscriptfont
      \else\ifx#1\scriptstyle\scriptfont
      \else\textfont\fi\fi
      3 %
  \sbox0{%
    $#1%
      \vrule width\dimen@\relax
      \overline{%
        \kern2\dimen@
        \begingroup % to keep changes of \dimen@ in #2 local
          #2%
        \endgroup
        \kern2\dimen@
      }%
      \vrule width\dimen@\relax
      \mathsurround=1.5\dimen@ % outside margin
    $%
  }%
  % TeX adds an empty space above `\overline', it needs to be
  % removed to get the correct height for the `\vrule's
  \ht0=\dimexpr\ht0-\dimen@\relax
  \dp0=\dimexpr\dp0+2\dimen@\relax
  \vbox{%
    \kern\dimen@ % reinsert previously removed space
    \copy0 %
  }%
}
\begin{document}
\title[Arithmetic and geometry of skew-reciprocal Polynomials]{On the arithmetic and the geometry of skew-reciprocal~polynomials}
\author{Livio Liechti}
\thanks{The author was supported by the Swiss National Science Foundation (grant nr.\ 175260)}
\address{Department of Mathematics, University of Fribourg, Ch.\ du Mus\'ee 23, 1700 Fribourg, Switzerland}
\email{livio.liechti@unifr.ch}

\begin{abstract} 
We reformulate Lehmer's question from 1933 and a question due to Schinzel and Zassenhaus 
from 1965 in terms of a comparison of the Mahler measures and the houses, respectively,
of monic integer reciprocal and skew-reciprocal polynomials of the same degree. 
This entails that understanding the difference between orientation-preserving and orientation-reversing 
mapping classes is at least as complicated as answering these questions.
\end{abstract}

\maketitle

\section{Introduction}
\noindent
Kronecker's theorem from 1857 states that if a nonzero algebraic integer 
is not a root of unity, then it has a Galois conjugate outside the unit circle~\cite{Kronecker}. 
Roughly speaking, Lehmer in 1933 and Schinzel and Zassenhaus in 1965 asked whether 
this statement can be made quantitatively precise using the Mahler measure or the house of polynomials,
respectively~\cite{Lehmer, SZ}.
The explicit questions they rose 
remain unanswered to this day.\medskip

\noindent
Our main results (Theorems~\ref{Lehmerthm} and~\ref{SZthm}) provide reformulations of Lehmer's question
and the question of Schinzel and Zassenhaus in terms of a comparison of reciprocal and 
skew-reciprocal polynomials. A polynomial~$f\in\mathbf{Z}[t]$ of degree~$d$ is~\emph{reciprocal} if~$f(t) = t^df(t^{-1}).$
Similarly, a polynomial~$f\in\mathbf{Z}[t]$ of even degree~$2d$ is~\emph{skew-reciprocal} 
if~$f(t)=(-1)^dt^{2d}f(-t^{-1}).$\medskip

\noindent
As we discuss in Section~\ref{mappingclasses}, reciprocal and skew-reciprocal polynomials arise naturally as the 
characteristic polynomials of integer symplectic and anti-symplectic matrices, respectively. These are in turn exactly 
the actions induced on the first homology of closed surfaces by orientation-preserving and orientation-reversing 
mapping classes, respectively. Via these equivalences, Theorems~\ref{Lehmerthm} and~\ref{SZthm} are our 
way of making precise the following statement:
\emph{Understanding the difference between orientation-preserving and orientation-reversing mapping classes is at 
least as complicated as answering Lehmer's question and the question of Schinzel and Zassenhaus}. 

\subsection{The Mahler measure}
Let~$f\in\mathbf{Z}[t]$ be a monic polynomial. The \emph{Mahler measure}~$M(f)$ of~$f$ is 
the modulus of the product of all zeroes of~$f$ outside the unit circle, counted with multiplicity:
~$$M(f)=\prod_{f(\alpha)=0}\mathrm{max}(1,|\alpha|).$$

\begin{que*}[Lehmer's question~\cite{Lehmer}]
\label{weaklehmerque}
Does the set of Mahler measures of monic integer polynomials accumulate at~$1$?
\end{que*}

\noindent
The smallest Mahler measure larger than~$1$ found by Lehmer is the one of the polynomial 
~$$L(t) = t^{10}+t^9-t^7-t^6-t^5-t^4-t^3+t+1.$$
We call~$\lambda_L = M(L) \approx 1.17628$ \emph{Lehmer's number}.
It is still the smallest known Mahler measure larger than~$1$.

\begin{que*}[Lehmer's question, strong version~\cite{Lehmer}]
\label{stronglehmerque}
Is Lehmer's number the smallest Mahler measure larger than~$1$ among all monic integer polynomials?
\end{que*}

\noindent
Our first result provides a reformulation of 
Lehmer's question.
Let~$R_i$ be the smallest Mahler measure larger than~$1$
among monic integer reciprocal polynomials of degree~$2^i$,
and let~$S_i$ be the smallest Mahler measure larger than~$1$ among monic integer skew-reciprocal 
polynomials of degree~$2^i$. 

\begin{thm}
\label{Lehmerthm}
Lehmer's number~$\lambda_L$
is the smallest Mahler measure larger than~$1$ among 
all monic integer polynomials exactly if~$R_i = S_i$ for all~$i\ge5$. 
Furthermore, the set of Mahler measures of monic integer polynomials accumulates at~$1$ 
exactly if~$\prod_{i=5}^N \frac{R_i}{S_i}$ converges to $\lambda_L^{-1}$ as~$N\to\infty$. 
\end{thm}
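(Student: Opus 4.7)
My plan is to establish the identity $S_i = R_{i-1}$ for every $i \geq 5$ and then to deduce both assertions via a telescoping argument.

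For $S_i \leq R_{i-1}$: let $f$ be a reciprocal polynomial of minimal degree $n \leq 2^{i-1}$ achieving $R_{i-1}$. Because $R_{i-1} \leq \lambda_L < \theta_0 \approx 1.3247$ (Smyth's bound for non-reciprocal polynomials), $f$ may be taken to be irreducible, and its degree must be even since any reciprocal polynomial of odd degree has $-1$ as a root. Then $f(t^2)$ is simultaneously reciprocal and skew-reciprocal of degree $2n \leq 2^i$; multiplying by $(t^2-1)^{(2^i-2n)/2}$ pads to degree exactly $2^i$ while preserving the Mahler measure. Here I use that $t^2-1=\Phi_1\Phi_2$ is itself skew-reciprocal and that products of skew-reciprocal polynomials remain skew-reciprocal (both of which are one-line checks from the definition).

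The reverse inequality $S_i \geq R_{i-1}$ is the main technical point. Let $h$ realize $S_i$ and decompose $h = c\cdot p$ with $c$ a product of cyclotomic factors and $p$ non-cyclotomic, so $M(p) = S_i \leq \lambda_L < \theta_0$. Smyth's theorem forces every irreducible factor of $p$ to be reciprocal; multiplicativity of $M$, together with the fact that every non-cyclotomic irreducible factor has $M \geq S_i$, forces $p$ to consist of a single irreducible reciprocal polynomial. The skew-reciprocality of $h$ then forces the skew-reversal $t^{\deg p}p(-t^{-1})$ to be proportional to $p$; combined with the reciprocity of $p$ this yields $p(t)=\pm p(-t)$. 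The sign $-$ is ruled out by irreducibility (it would force $p=t$), so $p(t)=g(t^2)$ for some $g$ with $M(g)=S_i$ and $\deg g=\deg p/2\leq 2^{i-1}$, giving $R_{i-1}\leq S_i$. The hard part is precisely this rigidity: ruling out a skew-paired pair of distinct irreducible factors in favor of the $g(t^2)$ form.

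Granting $S_i=R_{i-1}$, both statements follow. The sequence $R_i$ is non-increasing, since multiplying by a reciprocal cyclotomic of degree $2^i$ takes a reciprocal polynomial of degree $2^i$ to one of degree $2^{i+1}$ preserving the Mahler measure, so $R_i$ tends to some limit $R_\infty \geq 1$. Padding $L$ by a reciprocal cyclotomic of degree $6$ gives $R_4 \leq \lambda_L$, and the verified smallness of $\lambda_L$ in degrees $\leq 16$ gives $R_4 = \lambda_L$. Hence $R_i = S_i$ for all $i \geq 5$ is equivalent to $R_i = R_{i-1}$ for all $i \geq 5$, i.e.\ to $R_i$ being constantly $\lambda_L$ for $i \geq 4$, which by Smyth's theorem (guaranteeing that any Mahler measure below $\theta_0$ is realized by a reciprocal polynomial) is equivalent to $\lambda_L$ being the smallest Mahler measure larger than $1$ among all monic integer polynomials. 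For the second assertion, the product telescopes:
\[
\prod_{i=5}^{N}\frac{R_i}{S_i} = \prod_{i=5}^{N}\frac{R_i}{R_{i-1}} = \frac{R_N}{R_4} = \frac{R_N}{\lambda_L},
\]
which tends to $\lambda_L^{-1}$ precisely when $R_N \to 1$, and this is equivalent to the set of Mahler measures of monic integer polynomials accumulating at~$1$.
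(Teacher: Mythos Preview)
Your overall strategy matches the paper's: establish $S_i=R_{i-1}$ for $i\ge 5$ and then telescope, using $R_4=\lambda_L$ as the anchor. The inequality $S_i\le R_{i-1}$ via $g\mapsto g(t^2)$ is the same in substance; the paper simply applies it to a reciprocal $g$ of degree exactly $2^{i-1}$ (which is even), so no detour through irreducibility and padding by $(t^2-1)^k$ is needed.

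There is, however, a gap in your argument for $S_i\ge R_{i-1}$. You assert that ``every non-cyclotomic irreducible factor has $M\ge S_i$'' and conclude that $p$ is irreducible. This is not justified: an irreducible reciprocal factor $q$ of $p$ bears no a~priori relation to the skew-reciprocal minimum $S_i$. For instance, the skew-invariance of the roots of $p$ is compatible with $p(t)=q(t)\,q(-t)$ for an irreducible reciprocal $q$ with $q(t)\ne q(-t)$, in which case $M(q)=\sqrt{S_i}<S_i$. (One can rule this out a~posteriori using the already-proved inequality $S_i\le R_{i-1}$, but that takes extra work you did not supply.)

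The good news is that you do not need $p$ to be irreducible. Once Smyth's theorem tells you every irreducible factor of $p$ is reciprocal, the product $p$ is itself reciprocal; and since the non-root-of-unity roots of $h$ are closed under $\alpha\mapsto -\alpha^{-1}$, you get $p(t)=\pm p(-t)$. The minus sign is excluded by $p(0)\ne 0$, so $p(t)=g(t^2)$ with $g$ reciprocal of degree at most $2^{i-1}$ and $M(g)=S_i$, and padding by powers of $(t+1)$ gives $R_{i-1}\le S_i$. This is exactly the content of the paper's Lemma~\ref{rekurs}, which phrases the dichotomy for an arbitrary skew-reciprocal $f$: either $f$ is already reciprocal and hence equals $g(t^2)$, or $f$ has a nonreciprocal irreducible factor other than $t-1$, whose Mahler measure exceeds $\lambda_L$ by Breusch. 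With this small repair, your proof is correct and essentially the same as the paper's.
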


\noindent
Theorem~\ref{Lehmerthm} gives a reformulation of Lehmer's question in terms of a comparison of orientation-preserving and orientation-reversing mapping classes.
Indeed, from our discussion in Section~\ref{mappingclasses} it follows that~$R_i$ and~$S_i$ are the smallest Mahler measures larger than~$1$
among all characteristic polynomials of actions induced on the first homology of the closed surface of genus~$2^{i-1}$ by orientation-preserving and orientation-reversing 
mapping classes, respectively, compare with Proposition~\ref{polycharacter}. 

\subsection{The house}
Let~$f\in\mathbf{Z}[t]$ be a monic polynomial. 
The \emph{house}~$\house{f}$ of~$f$ is the largest modulus among the zeroes of~$f$,
$$\house{f} = \mathrm{max}_{f(\alpha)=0}|\alpha|.$$

\begin{que*}[Schinzel--Zassenhaus~\cite{SZ}]
Does there exists a universal constant~$c>0$ so that any house larger than~$1$
of an irreducible monic integer polynomial is at least~$1+\frac{c}{d},$
where~$d$ is the degree of the polynomial?
\end{que*}

\noindent
Our second result is a reformulation of the question of Schinzel and Zassenhaus in terms of a comparison of reciprocal 
and skew-reciprocal polynomials. Let~$\lambda_{i}$ and~$\widetilde\lambda_{i}$ be the smallest 
houses larger than~$1$ among all monic integer reciprocal and 
skew-reciprocal polynomials of degree~$2^i$, respectively.
Let~$r_{i} = 2^i\log(\lambda_{i})$ and~$s_{i} = 2^i\log(\widetilde\lambda_{i})$. 

\begin{thm}
\label{SZthm}
There exists a universal constant~$c>0$ so that any house larger than~$1$
of an irreducible monic integer polynomial is at least~$1+\frac{c}{d},$ 
where~$d$ is the degree of the polynomial,
exactly if the set~$\left\{\frac{q_N}{q_n}\in\mathbf{R} : n,N\in\mathbf{N}, n<N \right\}\subset\mathbf{R}$
is bounded away from zero, where~$q_m = \prod_{i=1}^{m}\frac{r_{i}}{s_{i}}.$
\end{thm}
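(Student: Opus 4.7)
The plan is to reformulate Schinzel--Zassenhaus in terms of the sequences $(r_i)$ and $(s_i)$ via two constructions linking reciprocal and skew-reciprocal polynomials. First, given a monic integer reciprocal polynomial $g$ of degree $2d$, writing $g(t)=t^d h(t+t^{-1})$ and setting $\widetilde g(t)=t^{2d}\, h\bigl((t-t^{-1})^2+2\bigr)$ yields a monic integer skew-reciprocal polynomial of degree $4d$ with $\house\widetilde g=\sqrt{\house g}$. Second, given a monic integer skew-reciprocal polynomial $f$ of degree $2d$, the polynomial $G$ defined by $G(t^2)=f(t)f(-t)$ is monic integer reciprocal of degree $2d$ with $\house G=(\house f)^2$. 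Finally, the substitution $g\mapsto g(t^2)$ produces a monic integer reciprocal polynomial of degree $4d$ with house $\sqrt{\house g}$. Applied to polynomials realizing the smallest houses, these give $\widetilde\lambda_{i+1}\leq\sqrt{\lambda_i}$, $\lambda_i\leq\widetilde\lambda_i^2$, and $\lambda_{i+1}\leq\sqrt{\lambda_i}$, which translate to $s_{i+1}\leq r_i$, $r_i\leq 2s_i$, and $r_{i+1}\leq r_i$ respectively. In particular $(r_i)$ is non-increasing with limit $L:=\lim_i r_i\in[0,r_1]$.

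\noindent
\textbf{Step 2 (Reducing SZ to $L>0$).} By Smyth's theorem, a non-reciprocal irreducible monic integer polynomial $f$ of degree $d$ has Mahler measure $M(f)\geq\theta_0>1$ (the smallest Pisot number); combined with $\log M(f)\leq d\log\house f$, this yields $d\log\house f\geq\log\theta_0$, establishing SZ in the non-reciprocal case. So SZ reduces to reciprocal polynomials, and multiplying by cyclotomic factors (reciprocal, house $1$) lets one pad any such polynomial up to degree $2^i$ without altering its house; combined with the monotonicity of $(\lambda_i)$, the SZ conjecture is equivalent to $L>0$.

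\noindent
\textbf{Step 3 (Forward direction).} From the index-shifted inequality $s_i\leq r_{i-1}$, telescoping yields
$$
q_N/q_n \;=\; \prod_{i=n+1}^N \frac{r_i}{s_i} \;\geq\; \prod_{i=n+1}^N \frac{r_i}{r_{i-1}} \;=\; \frac{r_N}{r_n}.
$$
If $L>0$, then $r_N\geq L$ and $r_n\leq r_1$, so $q_N/q_n\geq L/r_1>0$ uniformly in $n<N$, giving the direction SZ $\Rightarrow$ $\{q_N/q_n\}$ bounded away from zero.

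\noindent
\textbf{Step 4 (Main obstacle: the reverse direction).} Showing that $\{q_N/q_n\}$ bounded below forces $L>0$ is the subtle point. The telescoping above is only a lower bound, so $L=0$ does not automatically force $q_N/q_n\to 0$. My plan is to quantify the slack $r_{i-1}-s_i$ via the precise structure of the doubling constructions: if $L=0$ I would argue that polynomials realizing $\lambda_i$ must arise close to doubles of polynomials realizing $\widetilde\lambda_{i+1}$, forcing $s_{i+1}$ close to $r_i$. Then $r_i/s_i$ is forced close to $r_i/r_{i-1}$, the telescoping inequality becomes asymptotically tight, and $q_N/q_n\to r_N/r_n\to 0$, contradicting the hypothesis. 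Establishing this near-saturation — that near-extremal reciprocal polynomials of small house essentially arise as doubles of near-extremal skew-reciprocal ones — is the main technical step I expect the proof to turn on.
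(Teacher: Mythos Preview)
Your Steps~1--3 are fine and your forward direction in Step~3 is exactly the paper's argument (phrased contrapositively there): from $s_i\le r_{i-1}$ one telescopes to $r_N\le r_n\cdot q_N/q_n$, so if the ratios are bounded below then $r_N$ is, and conversely.

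The gap is Step~4. Your proposed ``near-saturation'' mechanism is both vague and unnecessary, and your heuristic points in the wrong direction: you want a \emph{lower} bound on $s_{i+1}$ in terms of $r_i$, not information about how extremal reciprocal polynomials arise. The paper obtains such a lower bound directly, via a clean dichotomy that you are missing. If $f$ is monic skew-reciprocal of degree $2^{i+1}$ with $\house f>1$, then either $f$ is also reciprocal, in which case (being simultaneously reciprocal and skew-reciprocal of degree divisible by~$4$) it has only even exponents and equals $g(t^2)$ for some reciprocal $g$ of degree $2^i$; or $f$ is not reciprocal, in which case it has a nonreciprocal irreducible factor other than $t-1$ (the factor $t-1$ alone cannot account for the nonreciprocity, since the constant term of $f$ is $+1$, forcing $(t-1)$ to appear to even multiplicity). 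In the first case $2^{i+1}\log\house f=2^i\log\house g\ge r_i$; in the second, Breusch/Smyth gives $\house f\ge\theta_0>1$ and hence $2^{i+1}\log\house f\ge\log\theta_0$. Thus
\[
s_{i+1}\ \ge\ \min\{r_i,\ \log\theta_0\}.
\]
This is the missing ingredient. With it, one proves by induction the explicit bound
\[
r_N\ \ge\ \frac{\log\theta_0\cdot q_N}{\max\{q_1,\dots,q_{N-1}\}},
\]
since $r_{N+1}=(r_{N+1}/s_{N+1})\,s_{N+1}=(q_{N+1}/q_N)\,s_{N+1}\ge(q_{N+1}/q_N)\min\{r_N,\log\theta_0\}$ and one then inserts the inductive hypothesis. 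If the set $\{q_N/q_n:n<N\}$ is bounded below by $\varepsilon>0$, this gives $r_N\ge\varepsilon\log\theta_0$ for all $N$, hence $L>0$.

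So the reverse implication requires no limiting or approximation argument; it follows from an exact structural lemma about skew-reciprocal polynomials of $2$-power degree, combined with the nonreciprocal case of Lehmer's problem. Your construction $G(t^2)=f(t)f(-t)$ and the resulting bound $r_i\le 2s_i$ are correct but play no role.
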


\noindent
Theorem~\ref{SZthm} gives a reformulation of the 
question of Schinzel and Zassenhaus in terms of a comparison of orientation-preserving and orientation-reversing 
mapping classes. Again, from our discussion in Section~\ref{mappingclasses} 
it follows that~$\lambda_i$ and~$\widetilde\lambda_i$ are 
equal to~$\delta_{2^{i-1}}^\mathrm{hom}$ and~$\widetilde\delta_{2^{i-1}}^\mathrm{hom}$, respectively, compare with Proposition~\ref{polycharacter}.
Here,~$\delta_g^\mathrm{hom}$ and~$\widetilde\delta_g^\mathrm{hom}$ denote the minimal
spectral radii larger than~$1$ among actions induced on the first homology of the closed surface of genus~$g$ 
by orientation-preserving and orientation-reversing mapping classes, respectively.  
\medskip

\noindent
The easiest way to fulfil the second statement in Theorem~\ref{SZthm} is if for all but finitely many~$i$, 
we have~$r_i\ge s_i$ and hence~$\delta_{2^{i-1}}^\mathrm{hom}\ge\widetilde\delta_{2^{i-1}}^\mathrm{hom}$.   
Combining the results and the conjectures of Hironaka~\cite{Hironaka}, Lanneau and Thiffeault~\cite{LT},
and Strenner and the author~\cite{LS},
this statement seems to have a chance of being true, 
at least when restricting to the actions induced by pseudo-Anosov 
mapping classes with an orientable invariant foliation. \medskip

\noindent
Finally, we reformulate the question of Schinzel and Zassenhaus as a comparison of~$\delta_g^\mathrm{hom}$ and
the minimal dilatation~$\delta_g$ among pseudo-Anosov mapping classes on the closed surface of genus~$g$ (defined in Section~\ref{pAsection}). 

\begin{thm}
\label{dilthm}
There exists a universal constant~$c>0$ so that any house larger than~$1$
of an irreducible monic integer polynomial is at least~$1+\frac{c}{d},$ 
where~$d$ is the degree of the polynomial, exactly if there exists a universal constant~$C>0$ 
so that for all~$g$, $(\delta_g^\mathrm{hom})^C\ge\delta_g.$
\end{thm}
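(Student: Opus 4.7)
The plan is to combine Proposition~\ref{polycharacter} --- which identifies $\delta_g^\mathrm{hom}$ with the smallest house larger than~$1$ among monic integer reciprocal polynomials of degree~$2g$ --- with the classical estimate $\log\delta_g\asymp 1/g$; the lower bound $\log\delta_g\ge c_P/g$ is due to Penner, and the matching upper bound $\log\delta_g\le C''/g$ comes from explicit pseudo-Anosov families (Penner, Hironaka).

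For the forward direction, assume Schinzel--Zassenhaus holds with a constant $c>0$. For each $g$, write $\delta_g^\mathrm{hom}=\house{f}$ for some monic integer reciprocal polynomial $f$ of degree $2g$, and pick an irreducible factor $p$ of $f$ that attains the house. Since $\deg p\le 2g$, Schinzel--Zassenhaus gives $\log\delta_g^\mathrm{hom}=\log\house{p}\ge\log(1+c/(2g))\ge c'/g$ for some $c'>0$ and all sufficiently large $g$. Combined with the upper bound $\log\delta_g\le C''/g$, this yields $\log\delta_g\le(C''/c')\log\delta_g^\mathrm{hom}$ for all large $g$; the finitely many small values of $g$ are absorbed by enlarging the constant.

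For the converse, assume Schinzel--Zassenhaus fails, and choose a sequence of irreducible monic integer polynomials $p_n$ of degree $d_n$ with $d_n\log\house{p_n}\to 0$. The key input is Smyth's theorem: every non-reciprocal irreducible monic integer polynomial $p$ satisfies $M(p)\ge\theta_0\approx 1.3247$, the smallest Pisot number; combined with the trivial estimate $M(p)\le\house{p}^{\deg p}$ this forces $\deg p\cdot\log\house{p}\ge\log\theta_0>0$ for every such~$p$. Hence the $p_n$ must be reciprocal for $n$ large, and since an irreducible reciprocal polynomial of degree greater than one has even degree, one may write $d_n=2g_n$ with $g_n\to\infty$. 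Then $p_n$ is itself eligible for the minimum defining $\delta_{g_n}^\mathrm{hom}$, so $g_n\log\delta_{g_n}^\mathrm{hom}\le g_n\log\house{p_n}\to 0$. Penner's lower bound $\log\delta_{g_n}\ge c_P/g_n$ then forces the ratio $\log\delta_{g_n}/\log\delta_{g_n}^\mathrm{hom}\to\infty$, ruling out any uniform constant~$C$.

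The real content of the argument is thus the combination of Proposition~\ref{polycharacter}, the two sides of $\log\delta_g\asymp 1/g$, and Smyth's theorem. I expect the main conceptual point to be the recognition that Smyth's theorem is what makes the converse go through: without it one would try to promote a counterexample $p_n$ to Schinzel--Zassenhaus into a reciprocal polynomial of degree $2d_n$ via $p_n(t)\cdot t^{d_n}p_n(1/t)$, but this construction can blow up the house by a factor as large as $d_n-1$ in the exponent and so does not preserve the strength of the failure. Smyth's theorem removes the need for any such conversion by ensuring that non-reciprocal counterexamples cannot exist in the first place.
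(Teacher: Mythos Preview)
Your argument is correct and follows essentially the same route as the paper: reduce to reciprocal polynomials via the Breusch/Smyth bound, identify $\delta_g^{\mathrm{hom}}$ with the minimal reciprocal house in degree~$2g$ via Proposition~\ref{polycharacter}, and then play this against Penner's two-sided estimate $(\delta_g)^g\asymp 1$ (Theorem~\ref{Penner}). The only cosmetic difference is that the paper invokes Breusch's theorem (Theorem~\ref{breusch}) where you invoke Smyth's sharper version, and the paper packages both halves of the $\log\delta_g\asymp 1/g$ bound into the single citation of Penner.
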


\subsection{Proof strategy}
\noindent
Lehmer's question and hence the question of Schinzel and Zassenhaus is solved in the case of
irreducible nonreciprocal polynomials, due to a result of Breusch~\cite{Breusch}.

\begin{thm}[Breusch~\cite{Breusch}]
\label{breusch}
The Mahler measure of any integer nonreciprocal irreducible polynomial other than~$(t-1)$ and~$t$ is greater than~$1.179$.
\end{thm}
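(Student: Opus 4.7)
The plan is to follow the template of Smyth's later (1971) argument, which gives the cleanest modern route to a result of this form. First I would reduce to a normalized case. If $f(0)=0$, then irreducibility forces $f=t$, which is excluded; and if $|f(0)|\ge 2$, then already $M(f)\ge |f(0)|\ge 2$, well beyond the claimed bound. So I may assume $f(0)=\pm 1$, in which case both $f$ and its reverse $f^{*}(t) = t^{\deg f} f(t^{-1})$ are monic integer polynomials of the same degree. Since $f$ is irreducible and nonreciprocal, $f\ne \pm f^{*}$ and the two polynomials are coprime in $\mathbf{Z}[t]$.

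Next I would bring in the integral formula for the Mahler measure. By Jensen's formula,
$$\log M(f) = \int_{0}^{1}\log\bigl|f(e^{2\pi i\theta})\bigr|\,d\theta,$$
and in the normalization above $M(f)=M(f^{*})$, because the roots of $f^{*}$ are the reciprocals of those of $f$ and $|f(0)|=1$. My plan would then be to exhibit, on the open unit disc, a bounded holomorphic auxiliary function built from $f$ and $f^{*}$, typically by dividing each of them by the Blaschke product attached to its roots inside the disc, so that the resulting quotient is analytic, nonvanishing, and of the same modulus as $f/f^{*}$ on the unit circle. The integer-valuedness of the coefficients of $f$ and $f^{*}$ then feeds into a nontrivial lower bound for the geometric mean of $|f(e^{2\pi i\theta})f^{*}(e^{2\pi i\theta})|$ over the circle, which in turn bounds $\log M(f)+\log M(f^{*})=2\log M(f)$ from below.

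The step I expect to be the main obstacle is the numerical one: extracting the explicit constant $1.179$ rather than just \emph{some} universal constant greater than $1$. The sharp version of this type of statement, Smyth's theorem, gives the smallest Pisot number $\theta_{0}\approx 1.3247$ (the real root of $t^{3}-t-1$) as the optimal lower bound, and identifying the extremal polynomial requires a delicate Hardy space analysis. Breusch's slightly weaker $1.179$ is obtained by a more elementary but still careful route — tracking the asymptotic behavior of the coefficients of $f^{n}$ and combining this with arithmetic constraints on $f$ and $f^{*}$ — and it is precisely in tuning these estimates that the work lies. The overall strategy is clear (exploit integer arithmetic plus the complex-analytic comparison $f\leftrightarrow f^{*}$); the difficulty is quantitative rather than conceptual.
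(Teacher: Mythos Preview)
The paper does not prove this statement; it is quoted from Breusch's 1951 paper and used as a black box throughout. There is therefore no ``paper's own proof'' to compare against. The authors only remark that the constant is not optimal and refer to Smyth's survey for sharper results.

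As for your sketch itself: the overall architecture (reduce to $|f(0)|=1$, compare $f$ with its reverse $f^{*}$, exploit that $f\ne\pm f^{*}$ by irreducibility and nonreciprocity, and convert integrality of coefficients into an analytic lower bound on the circle) is the right shape for a Smyth-type argument. But the middle paragraph is too vague to be a proof: the actual mechanism is not a lower bound on the geometric mean of $|f f^{*}|$ on the circle---that quantity equals $M(f)M(f^{*})$ tautologically and gives nothing. What Smyth does is study the power series of $f^{*}/f$ (or $f/f^{*}$) near $0$; the leading nonzero coefficient after the constant term is a nonzero integer, hence of modulus $\ge 1$, and a Schur/Hardy-space inequality then forces the outer part of this ratio, and hence $M(f)$, to be at least the smallest Pisot number. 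Breusch's original route is different again and more elementary. So your plan identifies the right ingredients but does not yet isolate the step where integrality is actually used; as written it would not produce any constant, let alone $1.179$.
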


\noindent
The constant of the bound in Theorem~\ref{breusch} is not optimal, but it suffices for our purpose.
For the optimal constant and more results on the Mahler measure and the house 
of integer polynomials, see Smyth's survey~\cite{Smyth}.  
\medskip

\noindent
We use Theorem~\ref{breusch} in order to reduce Lehmer's question and the question
of Schinzel and Zassenhaus to the case of irreducible reciprocal polynomials. 
From there, the main insight for the proofs of Theorems~\ref{Lehmerthm} and~\ref{SZthm} consists of the fact that one can in a controlled way compare 
skew-reciprocal polynomials of degree~$2^{i+1}$ with reciprocal polynomials of degree~$2^i$.\medskip

\noindent
Theorem~\ref{dilthm} follows rather directly from the fact that the minimal dilatations~$\delta_g$ 
among pseudo-Anosov mapping classes satisfy an inequality as in the question by Schinzel and Zassenhaus. 
This is a result due to Penner~\cite{Pe2}.

\begin{thm}[Penner~\cite{Pe2}]
\label{Penner}
There exist universal constants~$R, R'>1$ so that $$R\le(\delta_g)^g\le R'.$$
\end{thm}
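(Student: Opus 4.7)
The plan is to prove both implications by combining Penner's bound (Theorem~\ref{Penner}), which sandwiches $\log\delta_g$ between constant multiples of $1/g$, with the polynomial-to-matrix dictionary from Proposition~\ref{polycharacter} identifying $\delta_g^{\mathrm{hom}}$ with the minimal house larger than~$1$ among monic integer reciprocal polynomials of degree~$2g$. The Schinzel--Zassenhaus bound restricted to irreducible reciprocal polynomials of degree $2g$ is exactly a bound on $\log\delta_g^{\mathrm{hom}}$ of the same order $1/g$ as $\log\delta_g$, so the comparison $(\delta_g^{\mathrm{hom}})^C\ge\delta_g$ is essentially tautological once one converts between additive and multiplicative estimates. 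Breusch's theorem (Theorem~\ref{breusch}) then reduces the Schinzel--Zassenhaus question for general irreducible polynomials to the reciprocal case.

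For the forward direction, assume Schinzel--Zassenhaus holds with constant $c>0$. For any orientation-preserving mapping class on the genus $g$ surface whose action on $H_1$ has spectral radius larger than $1$, the characteristic polynomial is a monic integer reciprocal polynomial of degree $2g$, and some irreducible factor $p$ of it has house larger than $1$ and degree at most $2g$. By assumption, $\house{p}\ge 1+c/(2g)$, so $\delta_g^{\mathrm{hom}}\ge 1+c/(2g)$. For $g$ large this gives $\log\delta_g^{\mathrm{hom}}\ge c/(4g)$, while Penner's upper bound $(\delta_g)^g\le R'$ gives $\log\delta_g\le (\log R')/g$. Dividing yields a universal bound $\log\delta_g/\log\delta_g^{\mathrm{hom}}\le 4(\log R')/c$, and absorbing finitely many small values of $g$ into the constant produces $C>0$ with $(\delta_g^{\mathrm{hom}})^C\ge\delta_g$ for all $g$.

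For the reverse direction, assume $(\delta_g^{\mathrm{hom}})^C\ge\delta_g$. Penner's lower bound $(\delta_g)^g\ge R$ gives $\log\delta_g\ge(\log R)/g$, hence $\log\delta_g^{\mathrm{hom}}\ge(\log R)/(Cg)$, and so $\delta_g^{\mathrm{hom}}\ge 1+c_1/g$ for some universal $c_1>0$ and $g$ large. Now let $f$ be an irreducible monic integer polynomial of degree $d$ with $\house{f}>1$. If $f$ is nonreciprocal, Breusch's theorem yields $M(f)\ge 1.179$, and combined with $M(f)\le\house{f}^d$ this forces $\house{f}\ge 1+c_2/d$ for some universal $c_2>0$. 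If $f$ is reciprocal, irreducibility forces $d$ to be even (an odd-degree irreducible reciprocal polynomial of degree at least three would be divisible by $t+1$), say $d=2g$; by Proposition~\ref{polycharacter}, $f$ is the characteristic polynomial of some orientation-preserving mapping class action on first homology, so $\house{f}\ge\delta_g^{\mathrm{hom}}\ge 1+2c_1/d$. Taking $c=\min(c_1,c_2)$ and absorbing finitely many small degrees gives the Schinzel--Zassenhaus bound.

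The main technical input is Proposition~\ref{polycharacter}, invoked in the reverse direction to realize every irreducible reciprocal polynomial of degree $2g$ as a characteristic polynomial on first homology. Beyond this and the dichotomy provided by Breusch, no serious obstacle remains: the equivalence is essentially a matching of two estimates of order $1/g$, one from Penner and one from Schinzel--Zassenhaus applied through the polynomial dictionary.
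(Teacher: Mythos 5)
You have proved the wrong statement. The theorem you were asked to prove is Penner's theorem (Theorem~\ref{Penner}), which asserts the existence of universal constants~$R, R'>1$ with~$R\le(\delta_g)^g\le R'$. This is an external result cited from Penner~\cite{Pe2}; the paper does not prove it and indeed it cannot be proved by the techniques in this paper, since Penner's argument is genuinely geometric, relying on train tracks, explicit constructions of pseudo-Anosov maps with small dilatation (for the upper bound), and a lower bound coming from a careful analysis of the combinatorics of measured foliations and the Perron--Frobenius theory of transition matrices.

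What you have written instead is a proof of Theorem~\ref{dilthm}, the equivalence between a positive answer to Schinzel--Zassenhaus and the existence of a constant~$C$ with~$(\delta_g^{\mathrm{hom}})^C\ge\delta_g$. That argument is reasonable in outline and closely parallels the paper's own proof of Theorem~\ref{dilthm}, but it cannot serve as a proof of Penner's theorem for a basic logical reason: your very first sentence invokes ``Penner's bound (Theorem~\ref{Penner})'' as an input, so the argument is circular as a purported proof of Theorem~\ref{Penner} itself. Moreover, neither Breusch's theorem nor Proposition~\ref{polycharacter} nor the Schinzel--Zassenhaus hypothesis gives you any handle on~$\delta_g$, the minimal dilatation over \emph{all} pseudo-Anosov maps, as opposed to the homological quantity~$\delta_g^{\mathrm{hom}}$; the dilatation of a pseudo-Anosov map need not appear as an eigenvalue of its action on~$H_1$, so the polynomial dictionary you rely on does not reach~$\delta_g$ at all. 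To actually prove Theorem~\ref{Penner}, you would need to go back to Penner's construction of sequences of pseudo-Anosov maps on~$\Sigma_g$ with dilatation~$O(1)^{1/g}$ for the upper bound, and his argument bounding the spectral radius of Perron--Frobenius transition matrices from below in terms of the number of branches of an invariant train track for the lower bound.
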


\subsection{Organisation}
We prove Theorems~\ref{Lehmerthm} and~\ref{SZthm} in Section~2.
In Section~3, we relate reciprocal and skew-reciprocal polynomials with the 
characteristic polynomials of the actions induced on the first homology of closed surfaces by
mapping classes. We finally prove Theorem~\ref{dilthm}. \medskip

\noindent
\emph{Acknowledgements:} I would like to thank S.\ Baader, E.\ Hironaka, C.\ McMullen, B.\ Strenner 
and an anonymous referee for helpful discussions and comments.

%%%%%%%%%%%%%%%%%%%%%%%%%%%%%%%%%%%%%%%%%%%%%%%%

\section{Reciprocal vs.\ skew-reciprocal polynomials}

\noindent
Recall that a polynomial~$f\in\mathbf{Z}[t]$ of even degree~$2d$ is called {reciprocal} if we have~$f(t)=t^{2d}f(t^{-1})$,
and {skew-reciprocal} if~$f(t)=(-1)^{d}t^{2d}f(-t^{-1})$.

\begin{lem}
\label{rekurs}
Let~$f\in\mathbf{Z}[t]$ be a monic skew-reciprocal polynomial of degree~$2^{i+1}$ with~$\house{f}>1$. 
Then either~$f(t)=g(t^2)$, where~$g(t)$ is a reciprocal polynomial of degree~$2^i$, or~$f$ has a nonreciprocal irreducible factor other than~$(t-1)$. 
\end{lem}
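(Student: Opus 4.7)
My plan is to argue the contrapositive: assume every irreducible factor of $f$ is either reciprocal or equal to $(t-1)$, and deduce that $f(t)=g(t^2)$ for a reciprocal polynomial $g\in\mathbf{Z}[t]$ of degree $2^i$. The structural observation driving the argument is that the root multiset of $f$ carries two involutions: the skew-reciprocal symmetry $\sigma\colon\alpha\mapsto-\alpha^{-1}$, coming from the defining relation of $f$, and the reciprocal symmetry $\tau\colon\alpha\mapsto\alpha^{-1}$, coming from each irreducible factor individually (with $(t-1)$ appearing as a trivial case). Their composition is negation, and invariance of the root multiset under negation is precisely the statement $f(-t)=f(t)$, i.e.\ $f\in\mathbf{Z}[t^2]$.

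First I would read off from $f(t)=(-1)^{2^i}t^{2^{i+1}}f(-t^{-1})$ that the constant term of $f$ equals $(-1)^{2^i}\neq 0$, so $0$ is not a root and $\sigma$ is well defined on the root multiset, giving $m_\alpha(f)=m_{-\alpha^{-1}}(f)$ for every~$\alpha$. Next, for any root $\alpha$ of $f$, let $q$ be the irreducible factor of $f$ containing $-\alpha^{-1}$. Since $\mathbf{Q}$ has characteristic zero, $q$ is separable, so the multiplicity of any root of $q$ in $f$ equals the multiplicity of $q$ as a factor of $f$; in particular all roots of $q$ share the same multiplicity in $f$. By hypothesis, $q$ is either $(t-1)$ or reciprocal, so in both cases its root set is closed under $\tau$, and hence $(-\alpha^{-1})^{-1}=-\alpha$ is again a root of $q$ with the same multiplicity as $-\alpha^{-1}$. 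Combining, $m_{-\alpha}(f)=m_{-\alpha^{-1}}(f)=m_\alpha(f)$ for every root of $f$, so the root multiset of $f$ is invariant under negation. Since $f$ is monic of even degree, this forces $f(-t)=f(t)$; therefore $f(t)=g(t^2)$ for a monic $g\in\mathbf{Z}[t]$ of degree $2^i$. Substituting back into the skew-reciprocal relation and using $(-1)^{2^i}=1$ (valid for $i\ge 1$) gives $g(s)=s^{2^i}g(s^{-1})$ upon setting $s=t^2$, so $g$ is reciprocal of the required degree.

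The only point I expect to require separate handling is the boundary case $i=0$. Here $(-1)^{2^0}=-1$ makes the sign computation at the end produce an antireciprocal rather than reciprocal $g$, so the derivation above would not establish the first alternative. But this case is degenerate: every monic skew-reciprocal polynomial of degree $2$ has the form $t^2+at-1$, and the hypothesis $\house{f}>1$ forces $|a|\ge 1$, in which case the discriminant $a^2+4$ is not a square, $f$ is irreducible, and a direct check shows $f$ is nonreciprocal and distinct from $(t-1)$. So the second alternative of the lemma holds automatically in degree $2$, and the symmetry bookkeeping above handles all remaining cases, with no deeper obstacle in view.
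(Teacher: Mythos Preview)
Your proof is correct. The paper takes a slightly different route: rather than composing the two involutions $\sigma,\tau$ on the root multiset, it argues at the level of the polynomial itself. The paper's dichotomy is on whether $f$ is reciprocal. If it is, combining reciprocity with skew-reciprocity gives $f(t)=f(-t)$ directly (for $i\ge1$), and the rest proceeds as in your final paragraph. If $f$ is not reciprocal, some irreducible factor is nonreciprocal; the paper then rules out $(t-1)$ being the \emph{only} such factor by a parity argument: the constant term of $f$ is $+1$ (since $2^{i+1}$ is divisible by $4$), which forces $(t-1)$ to occur to an even power, but $(t-1)^{2k}$ is reciprocal, contradicting nonreciprocity of $f$. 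Your approach has the virtue of handling $(t-1)$ uniformly with the reciprocal factors via the fixed point $\tau(1)=1$, avoiding the separate parity argument and making the underlying symmetry $\sigma\tau=(\alpha\mapsto-\alpha)$ explicit; the paper's argument is marginally shorter and stays entirely at the coefficient level. You also treat the boundary case $i=0$ explicitly, which the paper's proof does not cover (its constant-term computation uses divisibility by $4$) and does not need, since the lemma is only invoked for $i\ge1$.
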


\begin{proof}
Let~$f\in\mathbf{Z}[t]$ be a monic skew-reciprocal polynomial of degree~$2^{i+1}$ such that~$\house{f}>1$. \smallskip

\noindent
\emph{Case 1:~$f$ is reciprocal}. If a polynomial~$f\in\mathbf{Z}[t]$ of degree~$2^{i+1}$ is both reciprocal and skew-reciprocal, 
we have~$f(t)=g(t^2)$, where~$g(t)$ is a reciprocal polynomial of degree~$2^{i}$.\smallskip

\noindent
\emph{Case 2:~$f$ is not reciprocal}. If~$f$ is not reciprocal, it must have at least one nonreciprocal irreducible factor. 
Moreover,~$(t-1)$ cannot be the only nonreciprocal irreducible factor.  
Indeed, if~$(t-1)$ was the only nonreciprocal irreducible factor, 
then it would have to appear to an even power, since the constant coefficient of~$f$ is~$+1$.
This follows directly from the definition of skew-reciprocity and the degree of~$f$ being divisible by four.
However, an even power of~$(t-1)$ is reciprocal and hence so would be the polynomial~$f$, a contradiction. 
We have shown that the polynomial~$f$ must contain a nonreciprocal irreducible factor other than~$(t-1)$. 
\end{proof}

\subsection{Mahler measures}
Recall that the {Mahler measure}~$M(f)$ of a monic polynomial~$f\in\mathbf{Z}[t]$ is the modulus of the product of all 
zeroes of~$f$ outside the unit circle, counted with multiplicity:~$$M(f)=\prod_{f(\alpha)=0}\mathrm{max}(1,|\alpha|).$$
We remark that~$M(f(t))=M(f(t^2))$ for any polynomial~$f\in\mathbf{Z}[t]$. 
Let~$R_i$ be the smallest Mahler measure larger than~$1$
among monic integer reciprocal polynomials of degree~$2^i$, 
and let~$S_i$ be the smallest Mahler measure larger than~$1$ among monic integer skew-reciprocal 
polynomials of degree~$2^i$. 

\begin{lem}
\label{Mahlerequality}
$S_{i+1} = R_i$ for~$i\ge 4$. 
\end{lem}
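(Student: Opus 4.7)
The plan is to prove the two inequalities $S_{i+1}\le R_i$ and $S_{i+1}\ge R_i$ separately, using Lemma~\ref{rekurs} combined with Breusch's theorem (Theorem~\ref{breusch}) to handle the reverse direction.

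For the easy direction $S_{i+1}\le R_i$, I would take a monic integer reciprocal polynomial $g$ of degree $2^i$ achieving $M(g)=R_i$ (which exists since monic integer polynomials of bounded degree and bounded Mahler measure form a finite set) and set $f(t)=g(t^2)$. Reciprocity of $g$ yields $t^{2^{i+1}}f(-t^{-1})=(t^2)^{2^i}g(t^{-2})=g(t^2)=f(t)$, so, since $(-1)^{2^i}=1$ for $i\ge 1$, $f$ is skew-reciprocal of degree $2^{i+1}$. Using the remark $M(g(t))=M(g(t^2))$ stated just before the lemma, $S_{i+1}\le M(f)=R_i$.

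The reverse inequality rests on the preliminary bound $R_i\le\lambda_L<1.179$ for $i\ge 4$. Since Lehmer's polynomial $L(t)$ is reciprocal of degree~$10$, the polynomial $L(t)\cdot(t^2+1)^{(2^i-10)/2}$ is reciprocal of degree $2^i$ with Mahler measure $\lambda_L$; this is precisely where the hypothesis $i\ge 4$ enters, since it guarantees that $(2^i-10)/2$ is a non-negative integer. Now let $f$ be a monic integer skew-reciprocal polynomial of degree $2^{i+1}$ attaining $M(f)=S_{i+1}$. Since $S_{i+1}\le R_i<1.179$ and $M(f)>1$ forces $\house{f}>1$, Lemma~\ref{rekurs} applies. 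Its second case is ruled out: a nonreciprocal irreducible factor of $f$ other than $(t-1)$ would, by Breusch, already contribute more than $1.179$ to $M(f)$, contradicting $M(f)<1.179$. We are therefore in the first case, $f(t)=g(t^2)$ with $g$ reciprocal of degree $2^i$, so $S_{i+1}=M(f)=M(g)\ge R_i$.

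The only real obstacle is the numerical calibration: Lehmer's number $\lambda_L\approx 1.17628$ must lie strictly below Breusch's bound $1.179$, and $L$ must be paddable by reciprocal cyclotomic factors to any admissible even degree $2^i$. Both aspects force the threshold $i\ge 4$; for smaller $i$ the degree-padding fails and $R_i<1.179$ can no longer be guaranteed, so the nonreciprocal case of Lemma~\ref{rekurs} could not be excluded at the minimum.
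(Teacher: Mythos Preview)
Your proof is correct and follows essentially the same approach as the paper: both directions use the substitution $g\mapsto g(t^2)$ and Lemma~\ref{rekurs} together with Breusch's bound, after first establishing $R_i\le\lambda_L$ via cyclotomic padding of Lehmer's polynomial. The only cosmetic differences are that the paper pads with powers of $(t+1)$ rather than $(t^2+1)$, and for the reverse inequality it argues directly that every skew-reciprocal $f$ of degree $2^{i+1}$ with $M(f)>1$ satisfies $M(f)\ge R_i$ in both cases of Lemma~\ref{rekurs}, whereas you restrict to a minimiser and use the already-proved bound $S_{i+1}<1.179$ to eliminate the nonreciprocal case.
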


\begin{proof}
Let~$\lambda_L\approx 1.17628$ be Lehmer's number, an algebraic integer of degree~$10$.
We have~$R_i \le \lambda_L$ for~$i\ge4$: the minimal polynomial of~$\lambda_L$ is reciprocal, so we can multiply
it with a power of~$(t+1)$ to obtain a reciprocal polynomial of arbitrary degree and Mahler measure equal to~$\lambda_L$. 
Furthermore, we have~$S_{i+1}\le R_i\le \lambda_L$ for~$i\ge4$. Indeed, if~$g$ is a reciprocal polynomial of 
degree~$2^i$, then~$f(t)=g(t^2)$ is a skew-reciprocal polynomial of degree~$2^{i+1}$ and~$M(f)=M(g)$. \medskip

\noindent
In order to prove~$S_{i+1}\ge R_i$ for~$i\ge4$, let~$f(t)$ be a monic skew-reciprocal polynomial of degree~$2^{i+1}$ and
of Mahler measure~$>1$. By Lemma~\ref{rekurs},~$f(t)$ either has an irreducible non-reciprocal factor other than~$(t-1)$, 
or equals~$g(t^2)$ for some reciprocal polynomial~$g(t)$. 
In the former case, Theorem~\ref{breusch} implies~$M(f)\ge \lambda_L\ge R_i$. 
In the latter case, we have~$M(f)=M(g)\ge R_i$. 
\end{proof}

\begin{proof}[Proof of Theorem~\ref{Lehmerthm}]
By Theorem~\ref{breusch}, Lehmer's question can be reduced to monic irreducible reciprocal polynomials. 
By multiplication with factors~$(t+1)$, one sees that Lehmer's question is in turn equivalent to the same question for 
(not necessarily irreducible) monic reciprocal polynomials of some degree~$2^i$, that is, for~$R_i$. 
Now, some number~$R_N$ is smaller than~$\lambda_L$ 
exactly if~$R_i < S_{i}$ for some~$i\ge5$. 
This follows directly from
$$R_N = \lambda_L \prod_{i=5}^N \frac{R_i}{R_{i-1}} = \lambda_L \prod_{i=5}^N \frac{R_i}{S_{i}},$$
where we use Lemma~\ref{Mahlerequality} to prove the second equality.
Furthermore, since we have~$S_{i}=R_{i-1}\ge R_i$, it holds that~$R_i\le S_i$ for all~$i\ge5$, and the set of all~$R_N$ accumulates at~$1$ if and only if 
$\prod_{i=5}^N \frac{R_i}{S_{i}}$ converges to~$\lambda_L^{-1}$ as~$N\to\infty$. 
\end{proof}

\subsection{Houses}
Recall that the house of a polynomial is the largest modulus among its roots.
Let~$\lambda_{i}$ and~$\widetilde\lambda_{i}$ be the smallest houses larger than~$1$ among all monic integer reciprocal and 
skew-reciprocal polynomials of degree~$2^i$, respectively.
Furthermore, let~$r_{i} = 2^i\log(\lambda_{i})$ and~$s_{i} = 2^i\log(\widetilde\lambda_{i})$. 

\begin{lem}
\label{inductive_lemma}
For~$i\ge1$, we have $s_{i+1} \ge \mathrm{min}\left\{r_{i}, \log(1.179)\right\}$.
\end{lem}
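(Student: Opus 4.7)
The plan is to pick a monic integer skew-reciprocal polynomial~$f$ of degree~$2^{i+1}$ realizing the minimum~$\widetilde\lambda_{i+1}$, and to apply Lemma~\ref{rekurs} so that the resulting dichotomy matches the two terms in~$\min\{r_i,\log(1.179)\}$. The existence of such a minimizing~$f$ is not an issue, since for fixed degree the monic integer polynomials with bounded house have bounded coefficients and hence form a finite set.

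First I would handle the case where~$f(t)=g(t^2)$ for some monic reciprocal polynomial~$g\in\mathbf{Z}[t]$ of degree~$2^i$. Since the roots of~$f$ are the square roots of the roots of~$g$, we have~$\house{f}=\sqrt{\house{g}}$. The hypothesis~$\house{f}>1$ forces~$\house{g}>1$, so~$\house{g}\ge\lambda_i$ by the definition of~$\lambda_i$. Taking logarithms and multiplying by~$2^{i+1}$ then yields~$s_{i+1}\ge r_i$, the first candidate in the claimed minimum.

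In the complementary case, Lemma~\ref{rekurs} furnishes a nonreciprocal irreducible factor~$p$ of~$f$ with~$p\ne(t-1)$. Skew-reciprocity forces the constant term of~$f$ to be~$\pm 1$, so also~$p\ne t$, and Theorem~\ref{breusch} applies to give~$M(p)>1.179$. The elementary inequality~$M(p)\le\house{p}^{\deg p}$ together with~$\deg p\le 2^{i+1}$ then yields~$\log\house{p}\ge\log(1.179)/2^{i+1}$. Since~$\house{p}\le\house{f}=\widetilde\lambda_{i+1}$, multiplying by~$2^{i+1}$ gives~$s_{i+1}\ge\log(1.179)$, the second candidate.

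I do not expect a serious obstacle here: the two cases of Lemma~\ref{rekurs} feed directly into the two sides of the claimed minimum, Breusch controls the house of any nonreciprocal irreducible factor, and the substitution~$t\mapsto t^2$ precisely accounts for the factor of~$2$ relating the houses in the reciprocal case. The only small point to keep in mind is the trivial estimate~$\deg p\le 2^{i+1}$ used in the Breusch case, which is what produces the dimensionally correct bound after multiplication by~$2^{i+1}$.
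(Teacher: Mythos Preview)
Your proposal is correct and follows essentially the same route as the paper: apply Lemma~\ref{rekurs} to split into the two cases, use the substitution $t\mapsto t^2$ to compare houses in the reciprocal case, and invoke Breusch's bound together with $M(p)\le\house{p}^{\deg p}\le\house{f}^{2^{i+1}}$ in the nonreciprocal case. The only cosmetic differences are that you fix a minimizing~$f$ (and justify its existence) whereas the paper bounds every admissible~$f$, and you spell out the exclusion~$p\ne t$ and the inequality chain from Mahler measure to house, which the paper leaves implicit.
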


\begin{proof}
Let~$f\in\mathbf{Z}[t]$ be a monic skew-reciprocal polynomial of degree~$2^{i+1}$ such that~$\house{f}>1$.
We use Lemma~\ref{rekurs} to distinguish two cases. 
Assume for the first case that~$f(t)=g(t^2)$, where~$g(t)$ is a reciprocal polynomial of degree~$2^{i}$.
In this case, we have~$\house{f}^2 = \house{g}$. 
It follows that~$2^{i+1}\mathrm{log}{\house{f}}=2^{i}\mathrm{log}\house{g}\ge r_{i}$. 
On the other hand, if~$f$ has a nonreciprocal irreducible factor that is not~$(t-1)$, then 
Theorem~\ref{breusch} implies~$2^{i+1}\mathrm{log}\house{f} \ge \mathrm{log}(1.179)$. 
\end{proof}

\begin{lem}
\label{Clemma}
The answer to the question of Schinzel and Zassenhaus is positive exactly if the sequence~$\{r_{i}\}$ is bounded strictly away from zero. 
\end{lem}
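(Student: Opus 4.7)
The plan is to prove both directions of the equivalence by comparing the house of a general irreducible integer polynomial with $\lambda_i$, the smallest house exceeding $1$ among monic integer reciprocal polynomials of the restricted degree $2^i$. For the forward direction (Schinzel--Zassenhaus with constant $c$ implies $\{r_i\}$ bounded away from zero), I would fix any $i\ge 4$ (so that $\lambda_i\le\lambda_L$ exists by the reciprocal padding argument used in Lemma~\ref{Mahlerequality}) and pick an irreducible integer factor $p$ of a minimiser of $\lambda_i$ whose house equals $\lambda_i$. Since $\deg p\le 2^i$, the Schinzel--Zassenhaus hypothesis applied to $p$ gives $\lambda_i\ge 1+c/2^i$ and hence $r_i\ge 2^i\log(1+c/2^i)$, a quantity which tends to $c$ as $i\to\infty$. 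Thus all but finitely many $r_i$ are bounded below by, say, $c/2$, while the remaining finitely many values are automatically strictly positive.

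For the reverse direction, I would let $p$ be an arbitrary irreducible monic integer polynomial of degree $d$ with $\house{p}>1$ and split into two cases according to whether $p$ is reciprocal. If $p$ is nonreciprocal, Breusch's theorem (Theorem~\ref{breusch}) together with the trivial bound $M(p)\le\house{p}^d$ immediately yields $\house{p}\ge 1.179^{1/d}\ge 1+\log(1.179)/d$. If $p$ is reciprocal, then $d$ is even (otherwise $p$ would equal $t+1$), and the key step is to pad $p$ by $(t+1)^{2^i-d}$ with $i\ge 1$ chosen minimally so that $2^i\ge d$; this produces a monic integer reciprocal polynomial of dyadic degree $2^i$ whose house is still $\house{p}$, so $\lambda_i\le\house{p}$. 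Since $2^i<2d$, the hypothesis $r_i\ge\varepsilon$ translates into $\log\house{p}\ge\varepsilon/2^i>\varepsilon/(2d)$ and hence $\house{p}\ge 1+\varepsilon/(2d)$. Combining the two cases yields the Schinzel--Zassenhaus bound with constant $\min(\log(1.179),\varepsilon/2)$.

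The main obstacle, such as it is, lies in the reciprocal case of the reverse direction: a bound restricted to dyadic degree $2^i$ must be transferred to arbitrary even degree $d$, and the $(t+1)$-padding trick is exactly what makes this possible, as it preserves both reciprocity and house while only doubling the degree at worst. The nonreciprocal case is then immediate from Breusch, and the forward direction reduces to a single application of the hypothesis to a carefully chosen irreducible factor.
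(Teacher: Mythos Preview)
Your proposal is correct and follows essentially the same approach as the paper: Breusch's theorem disposes of the nonreciprocal case, and the $(t+1)$-padding trick reduces the reciprocal case to dyadic degrees $2^i$ while at most doubling the degree, after which the equivalence between $r_i\ge\varepsilon$ and $\lambda_i\ge 1+c/2^i$ is immediate. Your write-up is more explicit than the paper's (treating the two directions and the two cases separately and tracking constants), and your restriction to $i\ge 4$ in the forward direction is unnecessary but harmless, since $\lambda_i$ exists for all $i\ge 1$.
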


\begin{proof}
By Theorem~\ref{breusch}, the question of Schinzel and Zassenhaus is equivalent to the same question restricted 
to reciprocal polynomials. Furthermore, any reciprocal polynomial~$f(t)$ can be multiplied by~$(t+1)^k$, 
where~$k$ is at most the degree of~$f(t)$, so that it becomes reciprocal of degree~$2^i$, 
for some~$i\ge1$, keeping its house. This means that the question of Schinzel and Zassenhaus is equivalent 
to the same question for (not necessarily irreducible) reciprocal polynomials of degree~$2^i$. 
The statement of the lemma now follows from the fact that~$\{r_{i}\} = \{2^i\log(\lambda_{i})\}$ 
is strictly bounded away from zero exactly if there exists a constant~$c$ such that~$\lambda_{i} > 1+\frac{c}{2^i}$ for all~$i$. 
\end{proof}

\begin{proof}[Proof of Theorem~\ref{SZthm}]
For one direction, we assume there exists a sequence~$\left\{\frac{q_{N_j}}{q_{n_j}}\right\}$, where~$0<n_j<N_j$, that converges to zero. 
If~$f(t)$ is a reciprocal polynomial of even degree, then~$f(t^2)$ is a skew-reciprocal polynomial. 
This implies~$s_{i} \le r_{i-1}$. 
In particular, we have~$$r_{N_j} = r_{n_j} \prod_{i=n_j+1}^{N_j} \frac{r_{i}}{r_{i-1}}\le 
r_{n_j}\prod_{i=n_j+1}^{N_j} \frac{r_{i}}{s_{i}} \le 4\log(\varphi)\frac{q_{N_j}}{q_{n_j}},$$
where~$\varphi$ is the golden ratio.
For the last inequality, we use~$r_{n_j}\le r_1 = 4\log(\varphi)$. 
The numbers~$r_{N_j}$ converge to~$0$ as~$j\to\infty$, 
giving a negative answer to the question of Schinzel and Zassenhaus by Lemma~\ref{Clemma}. \medskip

\noindent
For the other direction, we assume the set~$\left\{\frac{q_N}{q_n}\in\mathbf{R} : n,N\in\mathbf{N}, n<N \right\}\subset\mathbf{R}$
is bounded away from zero. \medskip

\noindent
\emph{Claim. $r_{N}\ge \frac{\mathrm{log}(1.179)q_N}{\mathrm{max}\left\{q_1,\dots,q_{N-1}\right\}}$.}\medskip

\noindent
We admit the claim for a moment.
By our assumption, there is a constant bounding all fractions~$\frac{q_N}{q_n}$ with~$0<n<N$ away from zero. 
In particular, by the claim, there exists a constant bounding~$r_{N}$ strictly away from zero for all~$N$. 
This is equivalent to a positive answer to the question of Schinzel and Zassenhaus by Lemma~\ref{Clemma}.\medskip

\noindent 
We now prove the claim by induction on~$N$. \medskip

\noindent
\emph{Base case:} For~$N=2$, we verify
$$r_{2} = \frac{r_2}{s_2}\cdot{s_2}
\ge\frac{q_2}{q_1}\mathrm{min}\{r_1,\log(1.179)\}=\frac{\log(1.179)q_2}{\mathrm{max}\{q_1\}},$$
where the inequality is due to Lemma~\ref{inductive_lemma}, and the equality on the right follows from~$r_1=4\log(\varphi)$, 
which is larger than~$\log(1.179)$.\medskip

\noindent
\emph{Inductive step:} We again use Lemma~\ref{inductive_lemma}.
We have~$$r_{N+1} = \frac{r_{N+1}}{s_{N+1}} \cdot s_{N+1}
\ge\frac{q_{N+1}}{q_{N}}\mathrm{min}\left\{r_{N},\mathrm{log}(1.179)\right\}.$$
Using the induction hypothesis on~$r_{N}$, this yields
\begin{align*} r_{N+1} &\ge \frac{q_{N+1}}{q_{N}}\mathrm{min}\left\{ \frac{\mathrm{log}(1.179)q_N}{\mathrm{max}\left\{q_1,\dots,q_{N-1}\right\}},\mathrm{log}(1.179)\right\}\\
&= \mathrm{log}(1.179)q_{N+1}\mathrm{min}\left\{ \frac{1}{\mathrm{max}\left\{q_1,\dots,q_{N-1}\right\} }, \frac{1}{q_N} \right\}\\
&= \frac{\mathrm{log}(1.179)q_{N+1}}{\mathrm{max}\left\{q_1,\dots,q_N\right\}},
\end{align*}
which completes the inductive step.
\end{proof}

\section{Symplectic matrices and mapping classes}
\label{mappingclasses}

\noindent
The goal of this section is to illustrate that monic integer reciprocal and skew-reciprocal polynomials arise naturally in geometry: 
as characteristic polynomials of symplectic and anti-symplectic matrices, respectively. These in turn arise as the actions induced 
on the first homology of closed surfaces by orientation-preserving and orientation-reversing mapping classes, respectively.
\subsection{Symplectic matrices}
An integer matrix~$A$ of size~$2g\times2g$ is \emph{symplectic} if it preserves the standard symplectic form 
$$\Omega=
\begin{pmatrix}
0 & I_g \\
-I_g & 0
\end{pmatrix},$$
that is, if~$A^{\top}\Omega A = \Omega$. An integer matrix~$A$ of size~$2g\times2g$ is \emph{anti-symplectic} if it reverses the standard symplectic form~$\Omega$,
that is, if~$A^{\top}\Omega A = -\Omega$. \medskip

\noindent
The next two lemmas characterise monic integer reciprocal and skew-reciprocal 
polynomials of even degree as the characteristic polynomials of 
integer symplectic and anti-symplectic matrices, respectively. 

\begin{lem}
The characteristic polynomial of an integer symplectic matrix is reciprocal, 
and the characteristic polynomial of an integer anti-symplectic matrix is skew-reciprocal.
\end{lem}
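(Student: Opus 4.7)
The plan is to exploit the defining relations $A^\top \Omega A = \pm \Omega$ directly. In both cases the relation rewrites as $A^\top = (\pm 1)\,\Omega A^{-1}\Omega^{-1}$, so that $A^\top$ is conjugate to $\pm A^{-1}$. Since $A$ and $A^\top$ always share the same characteristic polynomial $f(t)$, we conclude that $f(t)$ is the characteristic polynomial of $A^{-1}$ in the symplectic case and of $-A^{-1}$ in the anti-symplectic case.

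From there the computation is routine. Using $\det(tI - A^{-1}) = \det(A^{-1})\det(tA - I)$ and pulling out a factor of $t^{2g}$, one finds $\det(tI - A^{-1}) = \det(A)^{-1} t^{2g} f(t^{-1})$ and analogously $\det(tI + A^{-1}) = \det(A)^{-1} t^{2g} f(-t^{-1})$. Equating with $f(t)$ would then yield the reciprocal identity $f(t) = t^{2g} f(t^{-1})$ or the skew-reciprocal identity $f(t) = (-1)^g t^{2g} f(-t^{-1})$, provided one knows the value of $\det(A)$.

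The only real subtlety is pinning down $\det(A)$. Taking determinants in $A^\top \Omega A = \pm \Omega$ gives $\det(A)^2 = 1$ and hence $\det(A) = \pm 1$, but for the skew-reciprocal identity one needs the precise value $\det(A) = (-1)^g$, which the determinantal identity does not resolve. The plan is to upgrade to the Pfaffian: from $\mathrm{Pf}(A^\top \Omega A) = \det(A)\,\mathrm{Pf}(\Omega)$ combined with $\mathrm{Pf}(-\Omega) = (-1)^g \mathrm{Pf}(\Omega)$, the symplectic case gives $\det(A) = 1$ and the anti-symplectic case gives $\det(A) = (-1)^g$. An equivalent route is to use the induced action on $\wedge^{2g}(V^\ast)$, where $A$ acts by $\det(A)$ while the nonzero top form $\Omega^g$ transforms by $(\pm 1)^g$, yielding the same conclusion.

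Inserting these determinant values into the two characteristic polynomial identities above closes both cases simultaneously. The main obstacle is really just this sign computation; everything else is linear algebra driven by the two defining relations.
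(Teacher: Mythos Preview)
Your argument is correct. The manipulation $A^\top = \pm\,\Omega A^{-1}\Omega^{-1}$ together with the determinant computation $\det(tI \mp A^{-1}) = \det(A)^{-1} t^{2g} f(\pm t^{-1})$ is the standard route, and your use of the Pfaffian identity $\mathrm{Pf}(A^\top \Omega A) = \det(A)\,\mathrm{Pf}(\Omega)$ to pin down $\det(A) = (\pm 1)^g$ is exactly the right way to resolve the sign; the alternative via $\Omega^g \in \wedge^{2g} V^\ast$ is equivalent and equally valid.

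By way of comparison, the paper does not supply an argument at all: it simply declares the symplectic case a standard fact and cites the companion paper of Strenner and the author for the anti-symplectic adaptation. Your write-up is therefore strictly more self-contained than the paper's, and in all likelihood reproduces the argument that the cited reference carries out. The only thing one might add for completeness is a one-line justification that $A$ is invertible (immediate from $\det(A)^2 = 1$), so that writing $A^{-1}$ is legitimate from the outset.
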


\begin{proof}
The statement for symplectic matrices is a standard fact. An adaptation of the proof to anti-symplectic matrices is given by Strenner and the author~\cite{LS}.
\end{proof}

\begin{lem}
\label{symplecticrep}
Any monic reciprocal polynomial~$f\in\mathbf{Z}[t]$ of even degree is the characteristic polynomial of an integer symplectic matrix,
and any monic skew-reciprocal polynomial~$f\in\mathbf{Z}[t]$ of even degree is the characteristic polynomial of an integer anti-symplectic matrix.
\end{lem}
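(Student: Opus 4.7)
The plan is to realise $f$ as the characteristic polynomial of an explicit integer $2g \times 2g$ block matrix. The starting observation is the algebraic identities
\[
f(t) = t^g\,h(t+t^{-1}) \ \text{(reciprocal case)}, \qquad f(t) = t^g\,k(t-t^{-1}) \ \text{(skew-reciprocal case)},
\]
with $h, k\in \mathbf{Z}[u]$ monic of degree $g$. Both follow from (skew-)reciprocity by grouping the coefficients of $t^{\pm j}$ in $f(t)/t^g$ and invoking the Chebyshev-like identities for $t^j \pm t^{-j}$.

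The natural first candidates are the block matrices
\[
A_{\mathrm{s}} = \begin{pmatrix} 0_g & -I_g \\ I_g & B \end{pmatrix}, \qquad A_{\mathrm{a}} = \begin{pmatrix} 0_g & I_g \\ I_g & B \end{pmatrix},
\]
with $B$ an integer $g \times g$ matrix. A Schur-complement computation yields characteristic polynomials $t^g\,p_B(t+t^{-1})$ and $t^g\,p_B(t-t^{-1})$ respectively, so choosing $B$ with $p_B = h$ or $p_B = k$ realises $f$. A direct evaluation of $A^{\top}\Omega A$ shows that $A_{\mathrm{s}}$ is symplectic and $A_{\mathrm{a}}$ anti-symplectic with respect to the standard form precisely when $B = B^{\top}$. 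The naive task would then reduce to realising $h$ or $k$ as the characteristic polynomial of a symmetric integer $g \times g$ matrix.

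The main obstacle is that this is in general impossible: monic integer polynomials with non-real roots are never characteristic polynomials of symmetric integer matrices. To get around it I would abandon the standard form at an intermediate stage and work on the $\mathbf{Z}$-lattice $R = \mathbf{Z}[t]/(f(t))$, where multiplication by $t$ automatically has integer matrix and characteristic polynomial $f$. The (skew-)reciprocity of $f$ makes $\sigma\colon t\mapsto t^{-1}$ (respectively $\sigma\colon t\mapsto -t^{-1}$) a ring automorphism of $R$, and the bilinear form
\[
\omega(x,y) = \mathrm{tr}_{K/\mathbf{Q}}\bigl(\sigma(x)\,y\,c\bigr)
\]
with $K = \mathbf{Q}[t]/(f)$ and $c\in K$ chosen so that $\sigma(c) = -c$ (take $c = t - t^{-1}$ in the reciprocal case, $c = t + t^{-1}$ in the skew-reciprocal case) is alternating and $\mathbf{Z}$-valued on $R$; multiplication by $t$ preserves it when $\sigma(t)\,t = 1$ and reverses it when $\sigma(t)\,t = -1$, yielding the desired (anti-)symplectic behaviour with respect to $\omega$.

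The hard part will be verifying that $\omega$ is \emph{unimodular} on $R$, since only then does the symplectic basis theorem for integer alternating forms let one change basis and produce a matrix preserving, respectively reversing, the standard form $\Omega$. I expect to address this by splitting $f$ into its irreducible factors: dual pairs $p(t)\,p^{*}(t)$ of non-self-(skew-)reciprocal factors are handled by the block-diagonal $\mathrm{diag}(C_p,(C_p^{\top})^{-1})$ construction, which is (anti-)symplectic for an off-diagonal integer form; for each self-(skew-)reciprocal irreducible factor, the element $c$ and the underlying lattice inside $\mathcal{O}_K$ can be adjusted to absorb the discriminant, with the degenerate cases $f=(t\pm 1)^{2k}$ (for reciprocal) and the analogous skew cases treated by hand.
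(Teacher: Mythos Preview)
Your route is quite different from the paper's, which is entirely explicit. For the reciprocal case the paper simply quotes Ackermann's symplectic companion matrix $B$: an explicit $2g\times 2g$ integer matrix, symplectic for the standard $\Omega$, with characteristic polynomial any prescribed monic reciprocal polynomial. For the skew-reciprocal case it observes that $R=\left(\begin{smallmatrix} -I_g & 0\\ 0 & I_g\end{smallmatrix}\right)$ satisfies $R\Omega R=-\Omega$, so $RB$ is anti-symplectic, and then checks by expanding the determinant that the characteristic polynomial of $RB$ is obtained from that of $B$ by multiplying the coefficient of $t^i$ by $(-1)^{g+i}$ for $i<g$; this converts an arbitrary monic reciprocal polynomial into an arbitrary monic skew-reciprocal one. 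No arithmetic of number rings enters.

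Your diagnosis of the symmetric-$B$ obstruction is correct, and the trace-form construction on $\mathbf{Z}[t]/(f)$ is a natural alternative, but the gap you flag --- unimodularity of $\omega$ --- is genuine and your sketch does not close it. Already for the irreducible self-reciprocal factor $q(t)=t^2+1$ one has $t^{-1}=-t$ in $\mathbf{Z}[t]/(q)$, so every $c$ in the ring with $\sigma(c)=-c$ is of the form $bt$ with $b\in\mathbf{Z}$, and then $\omega(1,t)=\mathrm{tr}(bt^{2})=-2b$ is always even; no choice of $c$ in the ring yields a unimodular $\omega$. One must allow $c\in K$ lying in the codifferent of $\mathbf{Z}[\alpha]$ while still satisfying $\sigma(c)=-c$ and keeping $\omega$ integral, and producing such a $c$ for a general irreducible self-(skew-)reciprocal factor is a real argument, not a matter of ``adjusting $c$ to absorb the discriminant''. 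By contrast, your handling of non-self-reciprocal pairs via $\mathrm{diag}(C_p,(C_p^{\top})^{-1})$ is sound, since every monic irreducible factor of a monic reciprocal integer polynomial has constant term $\pm1$, making the inverse integral.
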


\begin{proof}
On page 686, Ackermann~\cite{Ackermann} defines a symplectic companion matrix 
$$B=
\begin{pmatrix}
0 & \dots & & & & \dots& 0 & -1 \\
1 & & & & &  & & -a_1 \\
 & \ddots & & & & & & -a_2 \\
 & & \ddots & & & & & \vdots \\
 & & & 1 & -a_1 & -a_2 & \dots & -a_{g} \\
 & & & & \ddots & & & 0 \\
 & & & & & \ddots & & \vdots \\
 & & & & & & 1 & 0 
\end{pmatrix}
$$
for the monic reciprocal polynomial 
$$h(t) = a_gt^g + \sum_{i=0}^{g-1} a_i (t^i + t^{2g-i}),$$
where~$a_0 = 1$. 
Multiplying~$B$ from the right with~$R=
\left(\begin{smallmatrix}
-I_g & 0 \\
0 & I_g
\end{smallmatrix}\right)$
yields an anti-symplectic matrix, since~$$(RB)^{\top}\Omega RB = B^{\top}R\Omega RB=B^{\top}(-\Omega)B = -\Omega.$$
The characteristic polynomial of~$B$ can be calculated by successively developing the first row, until the matrix is of size~$g\times g$,
then successively developing the last column. Multiplication with~$R$ changes only the first~$g$ rows of~$B$, and one can see that it only possibly changes 
the first~$g$ coefficients of the characteristic polynomial. More precisely, one can explicitly check that multiplication with~$R$ 
changes the coefficient of~$t^i$ by a sign~$(-1)^g(-1)^i$, for~$i< g$. Hence, the characteristic polynomial of~$RB$ is the monic skew-reciprocal polynomial
$$ f(t) =  a_gt^g + \sum_{i=0}^{g-1} a_i ((-1)^{g}(-t)^i + t^{2g-i}),$$
where~$a_0=1$. 
Altogether, we get any monic skew-reciprocal polynomial~$f\in\mathbf{Z}[t]$ of even degree as the characteristic polynomial of an
anti-symplectic matrix of the type~$RB$.  
\end{proof}

\subsection{Mapping classes}

Let~$\Sigma_g$ be the orientable closed surface of genus~$g$. 
A \emph{mapping class} of~$\Sigma_g$ is a homeomorphism~$\phi:\Sigma_g\to\Sigma_g$,
up to isotopy. Mapping classes of a fixed surface form a group under composition, the \emph{extended mapping class group}. 

\begin{lem}
The action induced on the first homology~$\mathrm{H}_1(\Sigma_g)$ by an orientation-preserving or 
orientation-reversing mapping class is given by an integer symplectic or anti-symplectic matrix, respectively. 
\end{lem}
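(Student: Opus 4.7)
The plan is to use the algebraic intersection form on $\mathrm{H}_1(\Sigma_g;\mathbf{Z})$, which is a non-degenerate antisymmetric bilinear form. Fix a standard symplectic basis $a_1,\dots,a_g,b_1,\dots,b_g$ of $\mathrm{H}_1(\Sigma_g;\mathbf{Z})$, in which the intersection form is represented by the matrix $\Omega$ from the previous subsection. Given a mapping class $\phi$, let $A\in\mathrm{GL}_{2g}(\mathbf{Z})$ be the matrix of the induced map $\phi_*\colon\mathrm{H}_1(\Sigma_g;\mathbf{Z})\to\mathrm{H}_1(\Sigma_g;\mathbf{Z})$ in this basis; the aim is to show that $A^\top\Omega A = \pm\Omega$, with sign according to whether $\phi$ preserves or reverses orientation.

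First I would recall the standard topological fact that the algebraic intersection pairing transforms under a homeomorphism $\phi$ by $\langle\phi_*x,\phi_*y\rangle = \deg(\phi)\,\langle x,y\rangle$ for all $x,y\in\mathrm{H}_1(\Sigma_g;\mathbf{Z})$. This follows from the Poincar\'e duality description of the intersection pairing as $\langle x,y\rangle = (\mathrm{PD}^{-1}x\smile\mathrm{PD}^{-1}y)[\Sigma_g]$, combined with the naturality of cup product and the fact that $\phi_*[\Sigma_g] = \deg(\phi)[\Sigma_g]$. Since $\phi$ is a homeomorphism of a closed orientable surface, $\deg(\phi) = +1$ when $\phi$ is orientation-preserving and $\deg(\phi) = -1$ when $\phi$ is orientation-reversing.

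Translating this into matrix form in the chosen basis, the identity $\langle\phi_*x,\phi_*y\rangle = \deg(\phi)\langle x,y\rangle$ is equivalent to $A^\top\Omega A = \deg(\phi)\,\Omega$. Hence $A$ is symplectic in the orientation-preserving case and anti-symplectic in the orientation-reversing case, which is the claim of the lemma.

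The main point where care is needed is verifying the sign behaviour of the intersection form under orientation reversal; everything else is a formal translation between the bilinear form and its Gram matrix. One could alternatively give a more hands-on proof by noting that an orientation-reversing mapping class can be written as the composition of a fixed orientation-reversing involution (say a reflection, which visibly acts anti-symplectically on a standard basis) with an orientation-preserving mapping class (which acts symplectically by the classical fact), and that the product of an anti-symplectic and a symplectic matrix is anti-symplectic; this avoids invoking Poincar\'e duality directly.
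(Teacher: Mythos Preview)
Your proof is correct and follows essentially the same approach as the paper: the paper's proof is the single sentence that an orientation-preserving or orientation-reversing mapping class preserves or reverses, respectively, the intersection form on~$\mathrm{H}_1(\Sigma_g)$. You have simply unpacked this sentence, supplying the Poincar\'e duality justification for the sign behaviour and making the passage to the matrix identity $A^\top\Omega A=\pm\Omega$ explicit.
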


\begin{proof}
The action of an orientation-preserving or orientation-reversing mapping class preserves or reverses, respectively, the intersection form on~$\mathrm{H}_1(\Sigma_g)$.
\end{proof}

\begin{lem}
\label{modrep}
Any integer symplectic or anti-symplectic matrix of size~$2g\times 2g$ is obtained as the action induced on the first homology~$\mathrm{H}_1(\Sigma_g)$ 
by some orientation-preserving or orientation-reversing mapping class, respectively.
\end{lem}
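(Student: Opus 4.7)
The plan is to reduce to the well-known surjectivity of the symplectic representation and then handle the anti-symplectic case by composing with one fixed orientation-reversing mapping class. For the symplectic half, I would invoke the classical theorem, usually attributed to Burkhardt (see e.g.\ Farb--Margalit's \emph{A Primer on Mapping Class Groups}), stating that the homology representation $\Psi\colon \mathrm{Mod}(\Sigma_g)\to\mathrm{Sp}(2g,\mathbf{Z})$ is surjective. The standard route chooses a symplectic basis of $\mathrm{H}_1(\Sigma_g)$ represented by simple closed curves, observes that a Dehn twist along a simple closed curve acts on $\mathrm{H}_1(\Sigma_g)$ as a symplectic transvection, and then appeals to the fact that these transvections generate $\mathrm{Sp}(2g,\mathbf{Z})$.

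For the anti-symplectic case, I would fix once and for all an orientation-reversing involution $\sigma\colon\Sigma_g\to\Sigma_g$; a convenient model is to embed $\Sigma_g$ symmetrically in $\mathbf{R}^3$ and take reflection across a plane. Choosing a symplectic basis of $\mathrm{H}_1(\Sigma_g)$ adapted to this reflection, the induced action $A_\sigma=\sigma_*$ is an explicit anti-symplectic integer matrix. Given now an arbitrary anti-symplectic integer matrix $A$, the product $A_\sigma^{-1}A$ is symplectic, because a composition of two anti-symplectic transformations preserves $\Omega$. By the surjectivity recalled above, some mapping class $\phi$ satisfies $\phi_*=A_\sigma^{-1}A$, and the composition $\sigma\circ\phi$ is an orientation-reversing mapping class whose action on $\mathrm{H}_1(\Sigma_g)$ equals $A_\sigma\cdot A_\sigma^{-1}A=A$, as required.

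The main obstacle is essentially bureaucratic rather than conceptual: one needs to invoke the surjectivity of $\Psi$ with the correct reference, and to exhibit at least one orientation-reversing homeomorphism $\sigma$ whose action on a chosen symplectic basis is a known anti-symplectic matrix. Once these two ingredients are in place, the result follows from the coset-decomposition argument above, since the symplectic matrices form an index-two subgroup of the group of integer matrices preserving $\Omega$ up to sign, and $\sigma$ provides a representative of the nontrivial coset both on the geometric side and on the linear-algebraic side.
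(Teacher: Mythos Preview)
Your proposal is correct and follows essentially the same approach as the paper: both invoke the classical surjectivity of the symplectic representation (citing Farb--Margalit) for the symplectic case, and both handle the anti-symplectic case by a coset argument, composing an arbitrary orientation-preserving mapping class with one fixed orientation-reversing mapping class to reach every anti-symplectic matrix. Your version is slightly more explicit in exhibiting a concrete $\sigma$ (a reflection of $\Sigma_g\subset\mathbf{R}^3$), whereas the paper simply notes that multiplication by any anti-symplectic matrix interchanges the two cosets, but the underlying argument is the same.
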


\begin{proof}
The statement for orientation-preserving mapping classes and symplectic matrices is a standard fact about the symplectic 
representation of mapping class groups, see, for example,~\cite{Primer}. 
The statement for anti-symplectic matrices follows from the fact that multiplication by an anti-symplectic matrix induces an automorphism of the group 
of symplectic and anti-symplectic matrices. This automorphism sends a symplectic matrix to an anti-symplectic one and vice-versa. 
In particular, by composing mapping classes which
represent all symplectic matrices by an orientation-reversing mapping class, we obtain all anti-symplectic matrices as actions on the first homology.
\end{proof}

\noindent 
The following proposition summarises the results of Section~\ref{mappingclasses} so far.

\begin{prop}
\label{polycharacter}
The monic integer reciprocal and skew-reciprocal 
polynomials of degree~$2g$ are exactly the characteristic polynomials of 
the actions induced on the first homology of the closed surface of genus~$g$ by orientation-preserving 
and orientation-reversing mapping classes, respectively.
\end{prop}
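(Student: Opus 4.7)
The plan is to chain together the four lemmas established in Section~\ref{mappingclasses}, handling the orientation-preserving and orientation-reversing cases in parallel. What we need is the equality of two sets of polynomials in $\mathbf{Z}[t]$: on the algebraic side, the reciprocal (resp.\ skew-reciprocal) monic polynomials of degree $2g$, and on the topological side, the characteristic polynomials of actions on $\mathrm{H}_1(\Sigma_g)$ induced by orientation-preserving (resp.\ orientation-reversing) mapping classes.

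First I would argue the inclusion from topology to algebra. Given an orientation-preserving mapping class $\phi$ of $\Sigma_g$, the induced action $\phi_\ast$ on $\mathrm{H}_1(\Sigma_g)\cong\mathbf{Z}^{2g}$ is represented by a symplectic integer matrix by the lemma following the definition of mapping classes, and similarly anti-symplectic in the orientation-reversing case. Then by the first lemma of the section, the characteristic polynomial of a symplectic matrix is reciprocal and that of an anti-symplectic matrix is skew-reciprocal. So every characteristic polynomial arising topologically has the desired algebraic form.

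For the converse inclusion, I would feed a given monic reciprocal (resp.\ skew-reciprocal) polynomial $f\in\mathbf{Z}[t]$ of degree $2g$ through Lemma~\ref{symplecticrep} to obtain a symplectic (resp.\ anti-symplectic) integer matrix $A$ of size $2g\times 2g$ whose characteristic polynomial is $f$. Then Lemma~\ref{modrep} realises $A$ as the action on $\mathrm{H}_1(\Sigma_g)$ of an orientation-preserving (resp.\ orientation-reversing) mapping class of $\Sigma_g$. Consequently $f$ appears as a characteristic polynomial on the topological side, completing the equality of sets.

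There is no real obstacle here, since each of the four ingredients is already in place; the proposition is essentially a bookkeeping statement assembling them. The only point worth flagging explicitly is that Lemma~\ref{symplecticrep} produces matrices of the correct size $2g\times 2g$, so that the dimensional match with $\mathrm{H}_1(\Sigma_g)$ in Lemma~\ref{modrep} is exact and no extra padding or truncation argument is required.
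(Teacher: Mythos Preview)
Your proposal is correct and matches the paper's approach exactly: the paper presents this proposition explicitly as a summary of the four preceding lemmas, without writing out a separate proof, and your argument simply spells out how those lemmas chain together to give both inclusions. There is nothing to add.
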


\subsection{Pseudo-Anosov mapping classes}
\label{pAsection}
A mapping class~$f$ of a surface~$\Sigma_g$ is \emph{pseudo-Anosov} 
if there exists a pair of transverse, singular measured~$f$-invariant foliations of~$\Sigma_g$
such that~$f$ stretches one of them by a factor~$\lambda>1$ and the other one by a factor~$\lambda^{-1}$. 
The number~$\lambda$ is called the \emph{dilatation} of~$f$ and is an algebraic integer~\cite{Th}. 
Let~$\delta_g$ be the smallest dilatation among all pseudo-Anosov mapping classes on~$\Sigma_g$. 
Recall that~$\delta_g^\mathrm{hom}$ is the minimal spectral radius larger than~$1$ among 
actions induced on the first homology of the closed surface of genus~$g$ by orientation-preserving mapping classes.\medskip

\noindent
We finish this section by proving Theorem~\ref{dilthm}. 

\begin{proof}[Proof of Theorem~\ref{dilthm}]
By Theorem~\ref{breusch}, the question of Schinzel and Zassenhaus is equivalent to the same question restricted to reciprocal polynomials. 
This is in turn equivalent to the statement for the characteristic polynomials of actions on the first homology induced by orientation-preserving mapping classes, by Proposition~\ref{polycharacter}. 
Thus, a positive answer to the question of Schinzel and Zassenhaus is equivalent to the statement: 
there exists a universal constant~$c'>0$ such that~$\delta_g^\mathrm{hom}$ satisfies~$\delta_g^\mathrm{hom}\ge1+\frac{c}{2g}.$
This is in turn equivalent to the existence of a constant~$c>1$ so that for all~$g$,~$$(\delta_g^\mathrm{hom})^g \ge c.$$

\noindent
For one direction, we assume that this inequality holds. Setting~$C=\log_c(R')$ 
yields~$(\delta_g^\mathrm{hom})^{gC}\ge c^{\log_c(R')} = R' \ge (\delta_g)^g,$
where~$R'$ is the constant from Theorem~\ref{Penner}.
This implies~$(\delta_g^\mathrm{hom})^C\ge\delta_g$. \medskip

\noindent
For the other direction, assume there exists a universal constant~$C$ such that~$$(\delta_g^\mathrm{hom})^C\ge\delta_g.$$
Then, we have~$(\delta_g^\mathrm{hom})^{gC}\ge(\delta_g)^g\ge R,$
where~$R>1$ is the constant in Theorem~\ref{Penner}. In particular, it follows that~$(\delta_g^\mathrm{hom})^g \ge R^{\frac{1}{C}}>1,$
which finishes the proof.
\end{proof}

\end{document}